\newcommand{\fh}{\mathfrak h}
\newcommand{\R}{\mathbb{R}}
\newcommand{\C}{\mathbb{C}}
\newcommand{\Z}{\mathbb{Z}}
\newcommand{\NN}{\mathbb{N}}
\newcommand{\qa}{\mathbb{H}} 
\newcommand{\SO}{\frak{so}}
\newtheorem{prop*}{Proposition}
\newtheorem{thm*}{Theorem}
\newtheorem{lemma*}{Lemma}
\newtheorem{cor*}{Corollary}
\newtheorem{rem*}{Remark}
\newtheorem{def*}{Definition}
\begin{document}

\title[]{
Local control on quaternionic Heisenberg group of dimension $7$
}

\keywords{
Nilpotent algebras, Lie symmetry group, Carnot groups,  sub--Riemannian geodesics}

\subjclass[]{}

 \author{Jan Eisner and 
Lenka Zalabov\'a
}

\address{
J.E.:Institute of Mathematics, 
Faculty of Science, University of South Bohemia, 
	Bra\-ni\-\v sov\-sk\' a 1760, 370 05 \v Cesk\' e Bud\v ejovice, 
Czechia\\
L.Z.:Institute of Mathematics, 
Faculty of Science, University of South Bohemia, 
	Bra\-ni\-\v sov\-sk\' a 1760, 370 05 \v Cesk\' e Bud\v ejovice, 
 and
     Department of Mathematics and Statistics, 
 	Faculty of Science, Masaryk University,
 	Kotl\' a\v rsk\' a 2, 611 37 Brno, 
Czech Republic
}

\email{jeisner@prf.jcu.cz, lzalabova@gmail.com
}	

\thanks{The authors thank to Jan Gregorovič and Petr Nečesal for useful discussions during the work on the paper. L.Z is supported by the grant GACR 24-10887S Cartan supergeometries and Higher Cartan geometries.}

\maketitle

\thispagestyle{empty}

\vspace{7pt}

\begin{abstract}
We describe the quaternionic Heisenberg group in the dimension $7$ explicitly as a matrix group. We study the local control of a compatible left-invariant control system.
We describe the impact of symmetries of the corresponding sub-Riemannian structure on the optimality of geodesics.
\end{abstract}

\section{Introduction}

The three-dimensional Heisenberg group, a $2$-step nilpotent Lie group of the filtration $(2,3)$, is the most known and studied model in the control theory and the sub-Riemannian geometry, \cite{ABB,donne}. In fact, it can be viewed as a flat, aka maximally symmetric, model in the sub-Riemannian geometry, \cite{zel}. In applications, it can be also viewed as a nilpotent approximation of the problem of the vertical rolling disk, \cite{b96,hzn}. 

There are various generalizations to study. Clearly, it is natural to consider Heisenberg groups of higher dimensions,  i.e. $2$-step nilpotent groups of filtrations $(2n,2n+1)$, or the products of $n$ copies of the three--dimensional Heisenberg groups, \cite{mope}. The other possibility is to focus on the free $2$-step distributions, i.e. filtrations $(n,\frac{n}{2}(n+1))$; the most known and studied is the case $(3,6)$, \cite{Rizzi, mya1, momo, mope36}. 
We are interested in another generalization; we focus here on the Heisenberg group of the lowest possible dimension over quaternions. This leads to a specific $2$-step nilpotent group of the filtration $(4,7)$.  

It follows from \cite{Biquard, parabook} that quaternionic Heisenberg groups are flat models of quaternionic contact structures which are Cartan geometries modeled over $\frak{sp}(p,q)$. We use this fact to describe the quaternionic Heisenberg algebra of the filtration $(4,7)$ and the corresponding Carnot group $\mathcal{H}$ via matrices from $\frak{sp}(2,1)$ in Section \ref{sec1.1}. We use the Maurer-Cartan form on $\mathcal{H}$ to define the non-holonomic distribution and the sub-Riemannian structure  compatible with the group structure in Section \ref{sec1.2}. We study symmetries of the sub-Riemannian structure both algebraically and infinitesimally in Section \ref{sec1.3}.

We formulate and solve the control problem corresponding to the sub-Riemannian structure on $\mathcal{H}$ in Section \ref{sec2.1}. We employ Hamiltonian viewpoint to approach the control problem and use its invariancy with respect to the group action, \cite{ABB}. 
In particular, we describe the Hamiltonian in suitable coordinates and find corresponding control functions in Section~\ref{sec2.1} and we describe geodesics starting at the origin in Section~\ref{sec2.2}.

Finally, we give our main results in  Sections \ref{sec3.1} and \ref{sec3.2}. In particular, we study the action of symmetries on geodesics and we use the action to discuss their optimality. 
It turns out that the behavour of geodesics is pretty analogous to classical Heisenberg, \cite{mope}. In Section \ref{sec3.3} we then give some final comments and visualizations.

\section{Quaternionic Heisenberg group}
\label{sec1}
Let us consider the quaternion algebra $\mathbb{H}$ and write
$q=q_1+q_2I+q_3J+q_4K$
 for a quaternion $q \in \qa$. We simultaneously use the matrix representation of the quaternion $q$ as
\begin{align}
\label{quaternion}
q= \left[ \begin {smallmatrix} q_{{1}}&-q_{{2}}&-q_{{3}}&-q_{{4}}
\\ q_{{2}}&q_{{1}}&-q_{{4}}&q_{{3}}
\\ q_{{3}}&q_{{4}}&q_{{1}}&-q_{{2}}
\\ q_{{4}}&-q_{{3}}&q_{{2}}&q_{{1}}\end {smallmatrix}
 \right].
\end{align}

\subsection{Description of Lie algebra and Lie group}
\label{sec1.1}
The quaternionic Heisenberg group is a maximally symmetric model of quaternionic structures, \cite[Section 4.3.3]{parabook}.  
These are Cartan geometries modeled on $\frak{sp}(p+1,q+1)$ and we focus on (real) dimension $7$, i.e. we start with $\frak{sp}(2,1)$. We consider the matrix representation of the Lie algebra $\frak{sp}(2,1)$ as follows
\begin{align}
\label{sp}
\left[
\begin{smallmatrix}
c & h & \Im(f) \\
b & \Im(d) & -\bar h \\
\Im(a) & -\bar b & -\bar c
\end{smallmatrix}
\right]
\end{align}
for $a,b,c,d,e,f,h \in \qa$, where 
$$\bar q=q_1-q_2I-q_3J-q_4K, \ \ \ \Im(q)=q_2I+q_3J+q_4K$$ denote the conjugated quaternion of $q$ and its imaginary part. Having this convention, the algebra $\frak{sp}(2,1)$ carries a natural $|2|$-grading whose components are according to the diagonal and the minus part of the grading being below the main diagonal.  

We model the quaternionic Heisenberg algebra $\fh$ as the minus part of the grading as follows
\begin{align*}
(b,\Im(a))=\left[
\begin{smallmatrix}
0 & 0 & 0 \\
b & 0 & 0 \\
\Im(a) & -\bar b & 0
\end{smallmatrix}
\right] \subset \frak{sp}(2,1)
\end{align*}
for (real) coordinates $a_2,a_3,a_4,b_1,b_2,b_3,b_4$. In particular, the Lie bracket equals to $$[(b,\Im(a)),(d,\Im(c))]=(0,-2\Im(\bar b d)).$$
Viewing the matrix interpretation as exponential coordinates around the origin, we can describe the corresponding group structure of the  quaternionic Heisenberg group $\mathcal{H}$ with the Lie algebra $\fh$. Indeed, direct matrix computation gives 
\begin{align}
\begin{split}
\label{operace}
(b,\Im(a))\cdot (d,\Im(c))&=\log(\exp(b,\Im(a))\exp(d,\Im(c)))\\ &=(b+d,\Im(a)+\Im(c)-\Im(\bar b d)).
\end{split}
\end{align}
From now on, we employ the following definition.
\begin{def*}
A \emph{quaternionic Heisenberg group} is a Lie group $\mathcal{H}$ of elements of the form 
$(b,\Im(a))$ together with the group multiplication \eqref{operace}.
\end{def*}

\subsection{Maurer-Cartan form and non-holonomic distribution} \label{sec1.2}
Let us describe here the Maurer-Cartan form $\omega_\mathcal{H}$ on $\mathcal{H}$ to give a more geometric interpretation of the structure. Using the matrix description of elements of $\mathcal{H}$ and the group operation \eqref{operace}, we find $\omega_{\mathcal{H}}=h^{-1}\cdot dh$ as 

$$
\omega_{\mathcal{H}}=
\left[
\begin{smallmatrix}
0 & 0 & 0 \\
\beta & 0 & 0 \\
\alpha & -\bar \beta & 0 
\end{smallmatrix}
\right], 
\ \ \ 
\beta=\left[
\begin{smallmatrix}
 db_1 &*&*& * \\
 db_2 &*&*& * \\
 db_3 &*&*& *  \\
 db_4 &*&*& * 
 \end{smallmatrix}
\right],
\ \ \ 
\alpha=\left[
\begin{smallmatrix}
 0 &*&*& * \\
 da_2-b_2 db_1+b_1 db_2+b_4db_3-b_3db_4 &*&*& * \\
 da_3 - b_3db_1-b_4 db_2+b_1 db_3+b_2 db_4 &*&*& *  \\
 da_4 - b_4 db_1+b_3 db_2- b_2 db_3+b_1 db_4&*&*& * 
 \end{smallmatrix}
\right],
$$
where positions $*$ are determined by the matrix representation of the  quaternion \eqref{quaternion}.
In particular, the corresponding coframe allows us to determine a $4$-dimensional distribution on $\mathcal{H}$ as follows.

\begin{prop*}
The natural $4$-dimensional left-invariant distribution $H$ on the
quaternionic Heisenberg group $\mathcal{H}$ is generated by the vector fields
\begin{align*}
e_1& =\partial_{b_1}+b_2\partial_{a_2}+b_3\partial_{a_3}+b_4\partial_{a_4} \\
e_2 & = \partial_{b_2}-b_1\partial_{a_2}+b_4\partial_{a_3}-b_3\partial_{a_4} \\
e_3 & =\partial_{b_3}-b_4\partial_{a_2}-b_1\partial_{a_3}+b_2\partial_{a_4} \\
e_4 & = \partial_{b_4}+b_3\partial_{a_2}-b_2\partial_{a_3}-b_1\partial_{a_4}
\end{align*}
where the notation $\partial_x$ stands for $\frac{\partial}{\partial_x}$.
\end{prop*}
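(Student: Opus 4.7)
The plan is to identify $H$ as the unique rank-$4$ left-invariant distribution whose value at the identity is the horizontal subspace $\{(b,0):b\in\qa\}\subset\fh$; equivalently, it is the common kernel of the three independent real components of the $\alpha$-block of the Maurer-Cartan form, i.e.\ of the $1$-forms
\begin{align*}
\theta_2&=da_2-b_2\,db_1+b_1\,db_2+b_4\,db_3-b_3\,db_4,\\
\theta_3&=da_3-b_3\,db_1-b_4\,db_2+b_1\,db_3+b_2\,db_4,\\
\theta_4&=da_4-b_4\,db_1+b_3\,db_2-b_2\,db_3+b_1\,db_4.
\end{align*}
These are left-invariant by construction and pointwise linearly independent (they involve distinct $da_i$'s), so they cut out a left-invariant rank-$4$ distribution on $\mathcal{H}$.

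First I would compute the push-forward $(L_{(b,\Im(a))})_*\partial_{d_i}|_0$ at the origin using the group law \eqref{operace}. Expanding the quaternion product $\bar b\,d$ via $I^2=J^2=K^2=-1$, $IJ=K$, $JK=I$, $KI=J$, the three imaginary components of $-\Im(\bar b\,d)$ become real bilinear expressions in $(b,d)$; their partial derivatives in $d_i$ at $d=0$ yield precisely the $b_j$-coefficients appearing in front of $\partial_{a_2},\partial_{a_3},\partial_{a_4}$ in the claimed $e_i$. This exhibits each $e_i$ as the left-invariant extension of $\partial_{b_i}|_0$ and so produces a left-invariant frame of rank $4$.

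To finish, I would substitute each $e_i$ into $\theta_2,\theta_3,\theta_4$ and verify the term-by-term cancellation: this is automatic, since the coefficients in the displayed expression for $\alpha$ were read off from the Maurer-Cartan form precisely to annihilate the horizontal directions. Pointwise linear independence of $e_1,\ldots,e_4$ is immediate, as the projection $e_i\mapsto\partial_{b_i}$ is the identity matrix; by rank count the span of $e_1,\ldots,e_4$ therefore coincides with $H$.

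The one genuine obstacle is bookkeeping of the quaternionic signs when expanding $\bar b\,d$: a swap in one of the relations $IJ,JK,KI$ or a misplaced conjugation would flip a single coefficient and is hard to spot visually. I would cross-check the outcome against the explicit form of $\alpha$ already displayed above the statement, which fixes the signs unambiguously.
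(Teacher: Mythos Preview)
Your proposal is correct and follows essentially the same approach as the paper: both identify $H$ as the annihilator of the $\alpha$-block of the Maurer--Cartan coframe and exhibit $e_1,\dots,e_4$ as the left-invariant frame spanning it. The only cosmetic difference is that the paper obtains the $e_i$ by dualizing the coframe $(\beta,\alpha)$ directly, whereas you produce them as push-forwards $(L_{(b,\Im(a))})_*\partial_{d_i}|_0$ from the group law and then check the annihilation of $\theta_2,\theta_3,\theta_4$; these two computations are equivalent and your sign bookkeeping via $-\Im(\bar b\,d)$ is exactly what underlies the displayed form of $\alpha$.
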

\begin{proof} 
Viewing the first columns of $\alpha, \beta$ as a (real) coframe,
one finds the vector fields $e_i$ as the $\beta$-part of the corresponding dual frame (while the $\alpha$-part consists just of $\partial_{a_i}$ for $i=2,3,4$). It follows from the properties of the Maurer-Cartan form that both the frame and the coframe are left-invariant with respect to the group multiplication.
\end{proof}
The distribution $H$ is non-holonomic. Denoting 
$$e_5=\partial_{a_2},\ e_6=\partial_{a_3},\ e_7=\partial_{a_4},$$ 
we get the following Lie algebra structure that clearly reflects the structure of $\fh$ and forms a $2$-step nilpotent Lie algebra
\begin{align}
\begin{split}
\label{zavorky}
[e_1,e_2]&=-2e_5, \ [e_1, e_3]=-2e_6, \ [e_1,e_4]=-2e_7, \\
[e_2, e_3]&=\ \ 2e_7, \ [e_2,e_4]=-2e_6, \ [e_3, e_4]=\ \ 2e_5.
\end{split}
\end{align}  
\subsection{Sub-Riemannian structure and its symmetries} \label{sec1.3}
Let us consider the left-invariant sub-Riemannian metric on $H$ by declaring $e_i$, $i=1,2,3,4$, mutually orthonormal, i.e.
$$
g=db_1^2+db_2^2+db_3^2+db_4^2.
$$ 
Altogehter, we get a left-invariant sub-Riemannian structure $(\mathcal{H},H,g)$ on the quaternionic Heisenberg group of dimension $7$. Before we focus on the corresponding control problem, we look at symmetries of the sub-Riemannian structure.

Let us firstly employ the algebraic viewpoint.
Symmetries of the quaternionic contact structure $(\mathcal{H},H)$ on the quaternionic Heisenberg group $\mathcal{H}$  are (left multiplications by) elements of $Sp(2,1)$. On the Lie algebra level, we focus on elements of $\frak{sp}(2,1)$. In particular, all translations along  $\mathcal{H}$ correspond to the minus part of the grading described in \eqref{sp}. In addition to the translations, isotropy symmetries of the sub-Riemannian metric $g$ may appear. If so, they generally correspond to a subalgebra of $\frak{so}(4,\R)$, \cite{ams}. In particular, isotropy symmetries of $g$ must be contained in the zero part of the grading described in \eqref{sp} and the positive part does not occur. The zero part takes form 
$$\mathbb{H}\oplus \frak{sp}(1)=\R\oplus \frak{sp}(1)\oplus\frak{sp}(1),$$ 
and the real part corresponds to the grading element whose action clearly does not preserve the metric. We end up with
\begin{align}
\label{mat-sym}
\left[
\begin{smallmatrix}
\Im(c) & 0 & 0 \\
b & \Im(d) & 0 \\
\Im(a) & -\bar b & \Im(c)
\end{smallmatrix}
\right] \subset \frak{sp}(2,1).
\end{align}

Let us remind that an infinitesimal symmetry of $(\mathcal{H},H,g)$ is a vector fields $v$ such that $\mathcal{L}_vg=0$ and $\mathcal{L}_vH \subset H$ for the Lie derivative $\mathcal{L}$. Flows of all such vector fields form a connected component of the symmetry group of $(\mathcal{H},H,g)$.
We show that the Lie algebra of infinitesimal symmetries corresponds to the algebra \eqref{mat-sym} and find explicitly the corresponding vector fields. 

\begin{prop*}
\label{prop-symmetries}
The Lie algebra of infinitesimal symetries of the left-invariant sub-Rie\-mann\-ian structure $(\mathcal{H},H,g)$
is generated by the following vector fields.
Translations are generated by the vector fields  
\begin{align*}
t_1&= \partial_{a_2} \\
t_2&=\partial_{a_3}\\
t_3&=\partial_{a_4}\\
t_4& =\partial_{b_1}-b_2\partial_{a_2}-b_3\partial_{a_3}-b_4\partial_{a_4}\\
t_5& =\partial_{b_2}+b_1\partial_{a_2}-b_4\partial_{a_3}+b_3\partial_{b_4}\\
t_6& =\partial_{b_3}+b_4\partial_{a_2}+b_1\partial_{a_3}-b_2\partial_{a_4} \\
t_7& =\partial_{b_4}-b_3\partial_{a_2}+b_2\partial_{a_3}+b_1\partial_{a_4}.
\end{align*}
In particular, these are right-ivariant vector fields on $\mathcal{H}$, i.e. they commute with $e_i$, $i=1, \dots, 7$.
Moreover, there is an algebra of infinitesimal symmetries stabilizing the origin $o=(0,\dots,0) \in \mathcal{H} $ generated by
\begin{align*}
s_1&=\ \ 2a_4 \partial_{a_3}-2a_3 \partial_{a_4}-b_2 \partial_{b_1}+b_1 \partial_{b_2}+b_4 \partial_{b_3}-b_3 \partial_{b_4}\\
s_2&=-2a_4 \partial_{a_2}+2a_2\partial_{a_4}-b_3 \partial_{b_1}-b_4 \partial_{b_2}+b_1 \partial_{b_3}+b_2 \partial_{b_4}\\
s_3&=\ \ 2a_3\partial_{a_2}-2a_2 \partial_{a_3}-b_4\partial_{b_1}+b_3\partial_{b_2}-b_2 \partial_{b_3}+b_1 \partial_{b_4}\\
s_4&= \ \ b_2\partial_{b_1}-b_1\partial_{b_2}+b_4\partial_{b_3}-b_3\partial_{b_4}\\
s_5 &= \ \ b_3\partial_{b_1}-b_4\partial_{b_2}-b_1\partial_{b_3}+b_2\partial_{b_4}\\
s_6 &=\ \ b_4\partial_{b_1}+b_3\partial_{b_2}-b_2\partial_{b_3}-b_1\partial_{b_4}
\end{align*}
that forms $\frak{sp}(1) \oplus \frak{sp}(1) \simeq \frak{so}(4,\R)$.
\end{prop*}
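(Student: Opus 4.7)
The plan is to leverage the algebraic discussion preceding the proposition, which identifies the maximal possible infinitesimal symmetry algebra of $(\mathcal{H},H,g)$ with \eqref{mat-sym} and therefore bounds its real dimension by $7+6=13$. Since the listed vector fields $t_1,\dots,t_7,s_1,\dots,s_6$ are $13$ in number and manifestly linearly independent (inspection of their leading partial-derivative terms suffices), it is enough to verify that each is an infinitesimal symmetry; dimension counting then forces this list to be a generating set of the full symmetry algebra.

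For the translation part $t_1,\dots,t_7$, I would obtain the fields as right-invariant vector fields on $\mathcal{H}$. Concretely, differentiating $t\mapsto (b,\Im(a))\cdot \exp(tX)$ at $t=0$ via the explicit group law \eqref{operace}, with $X$ running through the natural real basis of $\fh$, produces exactly the seven vector fields written in the statement. Since $H$ and $g$ are defined to be left-invariant, their flows -- which are left translations by one-parameter subgroups of $\mathcal{H}$ -- preserve both $H$ and $g$, so each $t_i$ is automatically an infinitesimal symmetry. The commutation $[t_i,e_j]=0$ is then the standard fact that right- and left-invariant vector fields on a Lie group commute.

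For the isotropy part $s_1,\dots,s_6$, I would realize the fields as fundamental vector fields for the adjoint action of the zero-grading component $\mathfrak{sp}(1)\oplus\mathfrak{sp}(1)\subset\mathfrak{sp}(2,1)$ on $\mathcal{H}$. For each chosen generator $X$ of this subalgebra, compute
$$\frac{d}{dt}\bigg|_{t=0}\log\bigl(\exp(tX)\exp(Y)\exp(-tX)\bigr)$$
for $Y=(b,\Im(a))\in\fh$, and then translate the resulting quaternionic expression into real coordinates using \eqref{quaternion}. The six outputs are $s_1,\dots,s_6$. Invariance of $H$ is automatic because the adjoint action respects the grading, and invariance of $g$ follows from the fact that the chosen generators lie in the compact part $\mathfrak{sp}(1)\oplus\mathfrak{sp}(1)$ rather than in the non-compact real line spanned by the grading element, so they act on the minus part by orthogonal transformations with respect to the chosen inner product.

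The main obstacle is purely bookkeeping: each quaternion entry of a matrix in \eqref{sp} encodes four real components via \eqref{quaternion}, so converting between abstract generators of $\mathfrak{sp}(1)\oplus\mathfrak{sp}(1)$ and concrete coefficients of $\partial_{a_i}$, $\partial_{b_j}$ requires careful sign and index tracking. Once the vector fields are in hand, identifying the stabilizer algebra with $\mathfrak{sp}(1)\oplus\mathfrak{sp}(1)\simeq\mathfrak{so}(4,\R)$ reduces to computing brackets among the $s_i$ and verifying that two appropriately chosen triples (one per $\mathfrak{sp}(1)$ summand) each close to a copy of $\mathfrak{so}(3)$ and commute with one another, which is a short direct computation.
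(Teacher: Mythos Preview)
Your approach is correct in spirit but differs from the paper's. The paper proceeds by brute force: it writes a general vector field $v=\sum v_i e_i$ in the left-invariant frame, imposes $\mathcal{L}_v g=0$ as a PDE system on $v_1,\dots,v_4$, solves it (with CAS assistance) to get a $10$-parameter family, then imposes $\mathcal{L}_v e_i\in H$ via contraction with the forms $\alpha$ and solves the resulting PDEs for $v_5,v_6,v_7$, yielding a $13$-parameter solution space. The generators $t_i,s_j$ are then read off from independent choices of constants, and right-invariance and the $\mathfrak{sp}(1)\oplus\mathfrak{sp}(1)$ bracket relations are verified a posteriori. Your argument instead takes the algebraic dimension bound from the paragraph before the proposition as given and matches it by producing the $13$ fields group-theoretically (right-invariant fields plus fundamental vector fields of the adjoint action). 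This is cleaner and more conceptual, but it shifts the burden onto that earlier discussion; the paper's PDE computation is self-contained and establishes both the upper and lower bound simultaneously, without relying on the cited structural results.

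One technical slip: differentiating $t\mapsto (b,\Im(a))\cdot\exp(tX)$ produces the \emph{left}-invariant fields $e_i$, not the right-invariant $t_i$. To obtain right-invariant fields whose flows are left translations (and hence preserve left-invariant data), you must differentiate $t\mapsto \exp(tX)\cdot(b,\Im(a))$. A quick sanity check with $X=(1,0)$ and the group law \eqref{operace} gives $\partial_{b_1}-b_2\partial_{a_2}-b_3\partial_{a_3}-b_4\partial_{a_4}=t_4$ only with this order; the opposite order flips the signs of the $\partial_{a_i}$ terms and yields $e_1$ instead.
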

\begin{proof}
Let us roughly summarize the computation, all the details can be treated by hand or using some CAS system; we used Maple.
We consider a general vector field $v=\sum_{i=1}^7v_ie_i$ for arbitrary functions $v_i=v_i(a_2,a_3,a_4,b_1,b_2,b_3,b_4)$. Computing the Lie derivative
$\mathcal{L}_vg$ and comparing the result to zero gives a system of PDEs for unknowns $v_1, v_2, v_3, v_4$; its solution takes form 
\begin{align*}
{ v_1} &=\ \ { c_1}{ b_2}+{ c_2}{ b_3}+{
 c_3}{ b_4}+{ c_4},\ \ { v_2}  =-{ c_1}{ b_1}
+{ c_5}{ b_3}+{ c_6}{ b_4}+{ c_7},\\
{ v_3} &=-{ c_2}{ b_1}-{ c_5}{ b_2}+{ c_8}{ 
b_4}+{ c_9},\ \ { v_4}  =-{ c_3}{ b_1}-{ c_6}{
 b_2}-{ c_8}{ b_3}+{ c_{10}}
\end{align*}
for constants $c_i$, $ i=1,\dots,10$.
Substituting into $v$, computing $\mathcal{L}_ve_i$ for $i=1,2,3,4$, contracting with defining forms $\alpha$ and comparing with zero gives another system of PDEs. Its solution takes form 
\begin{align*}
 { v_5} &=-{ c_1}b_1^{2}+ ( 
 ( { c_3}+{ c_5} ) { b_3}+ ( {
 c_6} -{ c_2} ) { b_4}+2{ c_7} ) { b_1}-{ c_1}
 b_2^{2}+ (  ( c_6-{ c_2} ) { b_3}
- ( { c_3}+{ c_5} ) { b_4}\\ &-2{ c_4}
 ) { b_2}+{ c_8}{{ b_3}}^{2}+{ c_8} b_4^{2
}+{ a_3}{ c_5}-{ a_3}{ c_3}+{ a_4}{ c_6}+2{
 c_9}{ b_4}-2{ c_10}{ b_3}+{ a_4}{ c_2}+{
 c_{11}},\\ 
{ v_6}  &=-{ c_2} b_1^{2}+ ( 
 -( { c_3}+{ c_5} ) { b_2}+ ( { c_1}+{
 c_8} ) { b_4}+2{ c_9} ) { b_1}-{ c_6}
b_2^{2}+ (  -( { c_1}+{ c_8} ) { b_3}
+2{ c_{10}} ) { b_2}\\ 
&-{ c_2} b_3^{2}- ( 
 ( { c_3}+{ c_5} ) { b_4}+2{ c_4} ) 
{ b_3}-{ c_6} b_4^{2}-{ a_2}{ c_5}+{ a_2}{
 c_3}-2{ c_7}{ b_4}+{ a_4}{ c_8}-{ a_4}{ 
c_1}+{ c_{12}},\\
{ v_7}  &=-{ c_3} b_1^{2}+
 (  ( { c_2}-{ c_6} ) { b_2}- ( { 
c_1}+{ c_8} ) { b_3}+2{ c_{10}} ) { b_1}+{
 c_5} b_2^{2}+ (  -( { c_1}+{ c_8}
 ) { b_4}-2{ c_9} ) { b_2}\\
&+{ c_5}{{ b_3}
}^{2}+ (  ({ c_6} -{ c_2} ) { b_4}+2{ 
c_7} ) { b_3}-{ c_3}b_4^{2}-{ a_2}{ c_6}
-{ a_2}{ c_2}-2{ c_4}{ b_4}-{ a_3}{ c_8}+{
 a_3}{ c_1}+{ c_{13}}  
\end{align*}
Suitable independent choices of constants $c_i$, $i=1,\dots,13$, then give all generators of the symmetry algebra.

One can check by direct computation that the first seven fields $t_i$, $i=1,\dots,7$, commute with left-invariant vector fields $e_i$, $i=1,\dots,7$, and have the same Lie bracket structure as $ e_i$ up to the sign (compare to \eqref{zavorky}), so they are right-invariant and generate translations.
Finally, one can check that the vector fields $s_1, s_2, s_3$ and $s_4, s_5, s_6$ generate two independent Lie algebras $\frak{sp}(1)$, i.e.
\begin{align*}
&[s_1, s_2] = 2s_3, \quad [s_1, s_3] = -2s_2, \quad [s_2, s_3] = 2s_1, \\
&[s_4, s_5] = 2s_6, \quad [s_4, s_6] = -2s_5, \quad [s_5, s_6] = 2s_4,
\end{align*}
so the stabilizer is isomorphic to a direct product of two Lie algebras $\frak{sp}(1)$.
\end{proof}

Our choice of generators is such that  translations $t_1, t_2, t_3$ obviously correspond to the $a$--block, and
 the translations $t_4, t_5, t_6, t_7$ correspond to the $b$--block of the matrix \eqref{mat-sym}.
 Moreover, symmetries $s_1,s_2,s_3$ generate $\frak{sp}(1)$ corresponding to the $c$--block, while the symmetries $s_4, s_5, s_6$ generate $\frak{sp}(1)$ corresponding to the $d$--block of the matrix \eqref{mat-sym}. 
Indeed, the action of the element 
$$
\left[
\begin{smallmatrix}
C & 0 & 0 \\
0 & D & 0 \\
0 & 0 & C 
\end{smallmatrix}
\right] 
$$ 
for unit quaternions $C,D$ corresponding to $\Im{(c)}, \Im{(d)}$ results in
\begin{align*}
 \log ( \left[
\begin{smallmatrix}
C & 0 & 0 \\
0 & D & 0 \\
0 & 0 & C 
\end{smallmatrix}
\right]
\exp ( \left[
\begin{smallmatrix}
0 & 0 & 0 \\
b & 0 & 0 \\
\Im(a) & -\bar b & 0 
\end{smallmatrix}
\right] )
\left[
\begin{smallmatrix}
 \bar C & 0 & 0 \\
0 & \bar D & 0 \\
0 & 0 & \bar C 
\end{smallmatrix}
\right]
)
=
\left[
\begin{smallmatrix}
0 & 0 & 0 \\
D b \bar C & 0 & 0 \\
C\Im(a)\bar C & -C\bar b \bar D & 0 
\end{smallmatrix}
\right].
\end{align*}

In the next, we denote the corresponding $\frak{sp}(1)$ by the subscript $c$ or $d$. Vector fields $s_4, s_5, s_6$ generating the algebra $\frak{sp}(1)_d$ correspond to the so called left-isoclinic rotations on the distribution $H$ in the standard basis planes (there are two orthogonal planes rotating in the same orientation by the same angle considering the basis $\partial_{b_{i}}, i=1,2,3,4$) and identity on $\partial_{a_{i}}, i=2,3,4$. 
Vector fields $s_1,s_2,s_3$ generating $\frak{sp}(1)_c$ correspond to the so called right-isoclinic rotations on the distribution $H$ in the standard basis planes (there are two orthogonal planes rotating in the opposite orientation by the same angle considering the basis $\partial_{b_{i}}, i=1,2,3,4$)
 composed with the rotation around a standard basis vector in the double angle in $3$-dimensional space given by $\partial_{a_i}, i =2,3,4$.
 
This clearly reflects the fact that the left multiplication of a quaternion by a unit quaternion represents a left-isoclinic rotation and the right multiplication represents a right-isoclinic rotation in $4$-dimensions, thus their compositions give arbitrary double rotations, i.e. elements of $SO(4,\R)$. Then the conjugation by a unit quaternion preserves a plane and gives a rotation in its orthogonal complement in the double angle.  
In particular, restricting to pure imaginary quaternions, it descends to a rotation in $3$-dimensional space.

\section{Local control and geodesics}

\subsection{Control problem and controls}
\label{sec2.1}
Let us now discuss the control system corresponding to the sub-Riemannian structure $(\mathcal{H},H,g)$. Considering (real) coordinates $(a_2,a_3,a_4,b_1,$  $b_2,b_3,b_4)$, the optimal control problem takes form 
\begin{align}
\label{control1}
\dot c(t)=
 u_1\begin{bmatrix}b_2 \\ b_3 \\ b_4 \\ 1 \\0\\0\\0
\end{bmatrix} +
u_2\begin{bmatrix}
-b_1 \\ \ b_4 \\ \ b_3 \\ \ 0 \\ \ 1\\ \ 0\\ \ 0
\end{bmatrix}  +
u_3\begin{bmatrix}
\ b_4 \\ \ b_1 \\ -b_3 \\ \ 0 \\ \ 0\\ \ 1\\ \ 0
\end{bmatrix}  +
u_4\begin{bmatrix}
-b_3 \\ \ b_2 \\ \ b_1 \\ \ 0 \\ \ 0\\ \ 0\\ \ 1
\end{bmatrix}  
\end{align}
for $t>0$ and $c \in \mathcal{H}$ and the control $u=(u_1,u_2,u_3,u_4) \in \R^4$ with the boundary condition $c(0)=c_1$ and $c(T)=c_2$ for fixed points $c_1,c_2 \in \mathcal{H}$, where we minimize the energy
\begin{align}
\label{control2}
\frac12\int_0^T (u_1^2+u_2^2+u_3^2+u_4^2)dt.
\end{align}

We follow here Hamiltonian concepts to approach the control system and we use its left-invariancy, \cite[Sections 7 and 13]{ABB}. The left-invariant vector fields $e_i$, $i=1,2,3,4$, $2e_i$, $i=5,6,7$ form a basis of $T\mathcal{H}$ and determine left-invariant coordinates on $\mathcal{H}$. We consider the factor $2$ here to avoid it in the subsequent equations. The corresponding left-invariant coordinates $h_i$, $i=1,\dots,7$ on fibers of $T^*\mathcal{H}$ are given by the evaluation $h_i (\lambda )=  \lambda(e_i)$, $i=1,2,3,4$ and $h_i (\lambda )=2\lambda(e_i)$, $i=5,6,7$, respectively, for arbitrary one-form $\lambda$ on $\mathcal{H}$. Thus we can use $(a_j,b_k,h_\ell)$, $j=2,3,4$, $k=1,2,3,4$, $\ell=1,\dots,7$, as global coordinates on $T^*\mathcal{H}$. 
In these coordinates, local minimizers, aka geodesics, are projections of the integral curves for Pontryagin's maximum principle system corresponding to the sub-Riemannian Hamiltonian 
\begin{align}
\label{hamiltonian}
K:=\frac12 (h_1^2+h_2^2+h_3^2+h_4^2)
\end{align}
 to the base space $\mathcal{H}$.
\begin{rem*}
For $2$-step filtrations, the projection of each abnormal geodesic coincides with the projection of a normal geodesic, i.e. there are no strict abnormal geodesics, \cite{Jean,Goh}. Thus it is sufficient to study the normal Hamiltonian \eqref{hamiltonian}.
\end{rem*}

Due to the fact that we have a left-invariant Hamiltonian on a Lie group, the fiber part of the system is given by the co-adjoint action of the differential of $dK(o)=\sum_{i=1}^4 h_ie_i$ at the origin $o \in \mathcal{H}$. Viewed as a linear endomorphism in our left-invariant basis, the adjoint action takes form 
\begin{align}
\label{adjoint-action}
\left[ \begin {smallmatrix} 0&0&0&0&0&0&0\\ 0&0&0
&0&0&0&0\\ 0&0&0&0&0&0&0\\ 0&0&0&0
&0&0&0\\ -{ h_2} ( t ) &{ h_1}
 ( t ) &{ h_4} ( t ) &-{ h_3} ( t
 ) &0&0&0\\ -{ h_3} ( t ) &-{ 
h_4} ( t ) &{ h_1} ( t ) &{ h_2} ( t
 ) &0&0&0\\-{ h_4} ( t ) &{ 
h_3} ( t ) &-{ h_2} ( t ) &{ h_1} ( t
 ) &0&0&0\end {smallmatrix} \right],
\end{align}
so the co-adjoint action is given by its transpose, i.e. we mupltiply a row vector $(h_1, \dots, h_7)$ by the matrix from the right.

For $i=5,6,7$ we get $\dot h_i=0$ and thus 
$h_i$ constant. For suitable constants then we denote 
\begin{align} \label{h567}
h_5=C_5,\ \ h_6=C_6,\ \ h_7=C_7.
\end{align} 
 Then for $h=(h_1,h_2,h_3,h_4)^t$ we get a linear system with constant coefficients
$
\dot h =  \Omega h
$
for the matrix
\begin{align}
\label{Omega}
\Omega=
\left[ 
\begin{smallmatrix} 
0&-{ C_5}&-{ C_6}&-{ C_7}
\\  {C_5}&0&{ C_7}&-{ C_6}
\\ { C_6}&-{ C_7}&0&{ C_5}
\\ { C_7}&{ C_6}&-{ C_5}&0
\end{smallmatrix}
 \right].
\end{align}

\begin{rem*}
The matrix $\Omega$ reflects minus the multiplication table of the Lie algebra $\fh$; that is typical behaviour for $2$-step filtrations, \cite{ABB}.
\end{rem*}

The solution of the system $\dot h =  \Omega h$ is given by  
$h(t)= e^{t \Omega} h(0)$, where  $h(0)$ is the initial value of the vector $h$ in the origin. 
If $C_5=C_6=C_7=0$, then $h(t)=h(0)$ is constant. To avoid these degenerate solutions, we assume that the vector $(C_5,C_6,C_7)$ is non--zero and 
we denote by $C$ its length $$C=\sqrt{C_5^2+C_6^2+C_7^2}.$$ 

\begin{prop*} \label{vert-p} The general solutions of the system $\dot h=\Omega h$ satisfying \eqref{h567} for  $C \neq 0$ take form   
\begin{align} 
\label{hacka}
 h(t)=\cos(Ct)u+\sin(Ct)v
\end{align}
for orthogonal vectors 
\begin{align} \label{uv}
u=\left[ 
\begin {smallmatrix} {\frac {{ C_6}{ C_7}{ C_1}+{ C_5
}C{ C_2}-{ 
C_5}{ C_7}{ C_3}+{ C_6}C{ C_4}}{ C_5^{2}+C_6^{2}}}
\\ { C_1}\\ { C_3}
\\ {\frac {{ C_5}{ C_7}{ C_1}-{ C_6}
C{ C_2}+{ C_6}
{ C_7}{ C_3}+{ C_5}C{ C_4}}{ C_5^{2}+ C_6^{2}}}\end {smallmatrix}
 \right],\ 
v=
\left[ 
\begin {smallmatrix} -{\frac {C{ C_5}{ C_1}-{ C_6}{ C_7}{ C_2}+
C{ C_6}{ C_3}
+{ C_4}{ C_5}{ C_7}}{ C_5^{2}+ C_6^{2}}}
\\ { C_2}\\ { C_4}
\\ {\frac {C{ C_6}{ C_1}+{ C_2}{ C_5}{ C_7}-C{ C_5}{ C_3}+{ C_4}
{ C_6}{ C_7}}{ C_5^{2}+ C_6^{2}}}\end {smallmatrix}
 \right] 
\end{align}
of the same length $\sqrt{D}$ for
\begin{align} \label{D}
D={\frac {2{ C_7} \left( { C_1}{ C_4}-{ C_2}{ C_3}
 \right) C+ \left(  C_1^{2}+ C_2^{2}+ C_3^{2}+ C_4^{2} \right)C^2 }{ C_5
^{2}+ C_6^{2}}},
\end{align}
in the case $(C_5,C_6)\neq 0$, and
for orthogonal vectors 
\begin{align} \label{uv7}
u=\left[ 
\begin {smallmatrix} 
C_4 \\ -C_2 \\ C_1 \\ C_3
\end{smallmatrix}
 \right],\ 
v= 
\left[ 
\begin {smallmatrix} 
-C_3 \\ C_1 \\ C_2 \\ C_4
\end{smallmatrix}
 \right] 
\end{align}
of the same length $\sqrt{D}$ for
\begin{align} \label{D7}
D= C_1^{2}+ C_2^{2}+ C_3^{2}+ C_4^{2},
\end{align}
in the case $(C_5,C_6)= 0$, where $C_1,C_2,C_3,C_4$ are real constants. 
\end{prop*}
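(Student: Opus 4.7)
The plan is to exploit the fact that $\Omega$ is skew-symmetric and, more importantly, satisfies $\Omega^2=-C^2I$, which reduces the exponential $e^{t\Omega}$ to a closed form and makes the claimed structure of $h(t)$ transparent.

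First I would verify by direct multiplication that
\[
\Omega^{T}\Omega=-\Omega^{2}=C^{2}I,
\]
using $C^{2}=C_{5}^{2}+C_{6}^{2}+C_{7}^{2}$; the diagonal entries of $\Omega^{2}$ each collapse to $-C^{2}$, and the off-diagonal cross terms cancel pairwise. From this identity the matrix exponential takes the form
\[
e^{t\Omega}=\cos(Ct)\,I+\tfrac{\sin(Ct)}{C}\,\Omega,
\]
since the even powers of $\Omega$ give $(-C^{2})^{k}I$ and the odd powers give $(-C^{2})^{k}\Omega$, assembling into the two real power series. Consequently the solution of $\dot h=\Omega h$ reads
\[
h(t)=\cos(Ct)\,h(0)+\sin(Ct)\cdot\tfrac{1}{C}\,\Omega\,h(0),
\]
which immediately gives \eqref{hacka} with $u=h(0)$ and $v=\tfrac{1}{C}\Omega h(0)$.

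Next I would establish orthogonality and equal length abstractly rather than by component chasing. Orthogonality is $u^{T}v=\tfrac{1}{C}\,h(0)^{T}\Omega\,h(0)=0$ because $\Omega$ is skew-symmetric, and equal length follows from
\[
|v|^{2}=\tfrac{1}{C^{2}}\,h(0)^{T}\Omega^{T}\Omega\,h(0)=h(0)^{T}h(0)=|u|^{2},
\]
using $\Omega^{T}\Omega=C^{2}I$. Thus the geometric content of the statement, namely that the two vectors span a $2$-plane on which the flow acts by uniform rotation of angular speed $C$, is already complete.

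The remaining content is to unpack the explicit parametrization by $(C_{1},C_{2},C_{3},C_{4})$. In the generic case $(C_{5},C_{6})\neq 0$, I would declare $C_{1}=u_{2}$, $C_{3}=u_{3}$, $C_{2}=v_{2}$, $C_{4}=v_{3}$; the last two amount to the linear system
\[
C_{5}\,u_{1}-C_{6}\,u_{4}=CC_{2}-C_{7}C_{3},\qquad C_{6}\,u_{1}+C_{5}\,u_{4}=CC_{4}+C_{7}C_{1},
\]
for the remaining coordinates $u_{1},u_{4}$, which has determinant $C_{5}^{2}+C_{6}^{2}\neq 0$ and whose solution yields exactly \eqref{uv}. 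The identity \eqref{D} then comes out by squaring and adding these two equations to get $(C_{5}^{2}+C_{6}^{2})(u_{1}^{2}+u_{4}^{2})$, adding $(C_{5}^{2}+C_{6}^{2})(C_{1}^{2}+C_{3}^{2})$, and using $C_{5}^{2}+C_{6}^{2}+C_{7}^{2}=C^{2}$ to simplify. For the degenerate case $(C_{5},C_{6})=0$, the matrix $\Omega$ has only $\pm C_{7}$ as non-zero entries, the denominators disappear, and one simply reads off $u,v,D$ from $h(0)$ and $C^{-1}\Omega h(0)$ as in \eqref{uv7}--\eqref{D7}.

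The only real obstacle is bookkeeping: the $(C_{1},\dots,C_{4})$ parametrization is non-canonical in the generic case, so one must justify why choosing $u_{2},u_{3},v_{2},v_{3}$ as free parameters produces a bijective relabelling of initial data, and then carry out the matching between the formulas \eqref{uv}--\eqref{D} and the intrinsic description $u=h(0)$, $v=C^{-1}\Omega h(0)$. Everything else is routine linear algebra driven by the single observation $\Omega^{2}=-C^{2}I$.
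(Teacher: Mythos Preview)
Your argument is correct and takes a genuinely different route from the paper. The paper proceeds by computing the complex eigenvalues $\pm iC$ of $\Omega$, writing down explicit complex eigenvectors in the two cases, expanding $e^{iCt}$ via Euler's formula, taking real and imaginary parts, and then assembling the general real solution as a combination with constants $C_1,\dots,C_4$; orthogonality and equal length of $u,v$ are checked by direct computation afterward. You instead pivot on the single algebraic identity $\Omega^{2}=-C^{2}I$, which immediately collapses the exponential to $e^{t\Omega}=\cos(Ct)\,I+C^{-1}\sin(Ct)\,\Omega$ and delivers $u=h(0)$, $v=C^{-1}\Omega h(0)$ intrinsically; orthogonality and equal norm then follow for free from the skew-symmetry of $\Omega$ and from $\Omega^{T}\Omega=C^{2}I$, without any componentwise check. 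Your approach is cleaner conceptually and avoids complex eigenvectors altogether; what the paper's eigenvector computation buys is that the constants $C_1,\dots,C_4$ arise naturally as the coefficients in the eigenbasis, whereas in your approach the matching with the stated formulas \eqref{uv}--\eqref{D} requires the extra relabelling step you describe (which you handle correctly via the $2\times 2$ system with determinant $C_5^{2}+C_6^{2}$). Both yield the same parametrization; yours explains more transparently \emph{why} $u\perp v$ and $|u|=|v|$ hold.
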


\begin{proof}
The solution of the system is given by exponential of the matrix $\Omega$, so we need to analyze its eigenvalues and eigenvectors.  
It follows that there are (complex conjugated) imaginary eigenvalues $\pm iC$  both of algeraic as well as geometric multiplicity two. In the case $(C_5,C_6)\neq 0$, the corresponding eigenspace of $iC$ is generated by two independent complex eigenvectors  
$$
\left[ 
\begin {smallmatrix} {\frac {iC{ C_5}+{ C_6}{ C_7}}{ C_5^{2}+ C_6^{2}}}\\ 1\\ 0
\\ {\frac {-iC{ C_6}+{ C_5}{ C_7}}{ C_5^{2}+ C_6^{2
}}}\end {smallmatrix} 
\right], \ \ 
\left[ 
\begin {smallmatrix} -{\frac {-iC{ C_6}+{ C_5}{ C_7}}{ C_5^{2}+ 
C_6^{2}}}\\ 0\\ 1
\\ {\frac {iC{ C_5}+{ C_6}{ C_7}}{ C_5^{2}+ C_6^{2
}}}\end {smallmatrix} 
\right] 
$$
and the eigenspace for $-iC$ is complex conjugate.
In the case $(C_5,C_6)=0$ and $C_7$ the only non-zero contant, the  corresponding eigenspace of $iC_7$ is generated by two independent complex eigenvectors
$$
\left[ 
\begin {smallmatrix} 
0 \\ -i \\ 1 \\0
\end {smallmatrix} 
\right], \ \ 
\left[ 
\begin {smallmatrix} 
i \\0\\0\\1
\end {smallmatrix} 
\right] 
$$
and the eigenspace for $-iC_7$ is complex conjugate.
Writting the complex solutions
\[
e^{iCt}
\left[ 
\begin {smallmatrix} {\frac {iC{ C_5}+{ C_6}{ C_7}}{ C_5^{2}+ C_6^{2}}}\\ 1\\ 0
\\ {\frac {-iC{ C_6}+{ C_5}{ C_7}}{ C_5^{2}+ C_6^{2
}}}\end {smallmatrix} 
\right], 
\quad 
e^{iCt}
\left[ 
\begin {smallmatrix} -{\frac {-iC{ C_6}+{ C_5}{ C_7}}{ C_5^{2}+ 
C_6^{2}}}\\ 0\\ 1
\\ {\frac {iC{ C_5}+{ C_6}{ C_7}}{ C_5^{2}+ C_6^{2
}}}\end {smallmatrix} 
\right]\,, 
\quad\text{ and }\quad
e^{iC_7t}
\left[ 
\begin {smallmatrix} 
0 \\ -i \\ 1 \\0
\end {smallmatrix} 
\right], 
\quad
e^{iC_7t}
\left[ 
\begin {smallmatrix} 
i \\0\\0\\1
\end {smallmatrix} 
\right]\,,
\]
respectively,
utilizing the Gauss formula,
decomposing both couples of these complex solutions to real and imaginary parts,
considering the combinations for constants $C_1,C_2,C_3,C_4$ and reordering gives \eqref{hacka}.
It is a direct computation to check the orthogonality of vectors $u, v$ and to find their lengths.
\end{proof} 

Let us discuss some properties of the solutions. In particular, the following observation holds in general. Denote $(\cdot , \cdot)$ the scalar product and $\| \cdot \|$ the corresponding norm.

\begin{prop*} \label{circle-p} 
Let $\beta\in\R$ be arbitrary.
Let us assume there are two vectors $u,v\in\R^n$ of the same length 
such that $(u,v)=0$, i.e., they are othogonal.
Then $$\|\cos(\beta t)u + \sin(\beta t) v\|=\|u\|=\|v\|$$ for any $t\in\R$.
In particular, the curve given by the parametric formula
$$X:\R\to\R^n, \ \ 
X(t)=\cos(\beta t)u + \sin(\beta t) v, \ \ t\in\R,$$ is a circle 
in the plane $\langle u,v \rangle$ given by the vectors $u$ and $v$
and centered at the origin.
\end{prop*}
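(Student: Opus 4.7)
The plan is to reduce the norm identity to a direct computation using bilinearity of the scalar product, and then deduce the geometric description (circle in the plane $\langle u,v\rangle$) from two observations: $X(t)$ is by construction a linear combination of $u$ and $v$, and its distance from the origin is constant.

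First I would compute $\|X(t)\|^2 = (X(t), X(t))$ by expanding via bilinearity:
\begin{equation*}
\|X(t)\|^2 = \cos^2(\beta t)\,(u,u) + 2\cos(\beta t)\sin(\beta t)\,(u,v) + \sin^2(\beta t)\,(v,v).
\end{equation*}
The orthogonality hypothesis $(u,v)=0$ kills the middle term. The equal-length hypothesis $\|u\|=\|v\|$ lets one factor out the common value $\|u\|^2 = \|v\|^2$, and the Pythagorean identity $\cos^2(\beta t)+\sin^2(\beta t)=1$ collapses the remaining expression to $\|u\|^2$. Taking square roots yields the norm identity.

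For the geometric claim, I would remark that $X(t) \in \langle u, v\rangle$ for every $t$ by definition, so the image is contained in the $2$-plane spanned by $u$ and $v$ (note this plane is genuinely $2$-dimensional since $u$ and $v$ are nonzero and orthogonal, hence linearly independent). Combined with the constant norm $\|X(t)\|=\|u\|$, the image lies in the intersection of $\langle u,v\rangle$ with the sphere of radius $\|u\|$ centered at the origin, which is a circle centered at the origin. To confirm the parametrization traces out the whole circle, I would check that $u/\|u\|$ and $v/\|v\|$ form an orthonormal basis of $\langle u,v\rangle$, so that $X(t) = \|u\|\bigl(\cos(\beta t)\,\tfrac{u}{\|u\|} + \sin(\beta t)\,\tfrac{v}{\|v\|}\bigr)$ is the standard angular parametrization.

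There is no real obstacle here; the statement is essentially a one-line computation, and the main point is simply to record it as a lemma for use in the subsequent geometric discussion of the geodesics, where it will be applied with $n=4$ and the explicit $u,v$ from \eqref{uv} or \eqref{uv7}.
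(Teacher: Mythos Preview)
Your proof is correct and follows essentially the same approach as the paper: both expand $\|X(t)\|^2$ by bilinearity, use orthogonality and equal length to reduce to the Pythagorean identity, and then observe that the image lies in $\langle u,v\rangle$ at constant distance from the origin. Your version is slightly more detailed in justifying that the full circle is traced out, whereas the paper simply writes ``and the rest follows.''
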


\begin{proof}
Direct computation with the help of known properties of goniometric functions gives
\[
\begin{array}{rcl}
\|\cos(\beta t)u + \sin(\beta t) v\|^2
&=&
\left(\cos(\beta t)u + \sin(\beta t) v, \cos(\beta t)u + \sin(\beta t) v\right)
\\
&=&
\cos^2(\beta t)\|u\|^2 + \sin^2(\beta t) \|v\|^2 +2(u,v)\cos(\beta t)\sin(\beta t) 
\\
&=&
\|u\|^2=\|v\|^2
\end{array}
\]
for any $\beta,t\in\R$ and the rest follows.
\end{proof}

\begin{cor*}\label{circle-c}
The function $h$ from~\eqref{hacka} describes a circle
centered at the origin 
and lying in the plane $\langle u,v \rangle$,
where $u$ and $v$ being
from~\eqref{uv} or~\eqref{uv7}, 
respectively,
\end{cor*}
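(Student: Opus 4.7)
The plan is to observe that this corollary is an immediate consequence of combining the two preceding results. Proposition \ref{vert-p} supplies precisely the hypotheses required by Proposition \ref{circle-p}: the function $h$ from \eqref{hacka} has the form $h(t)=\cos(Ct)u+\sin(Ct)v$, which matches the parametric formula in Proposition \ref{circle-p} with $\beta=C$, and the vectors $u,v$ (whether taken from \eqref{uv} or from \eqref{uv7}) are explicitly asserted there to be orthogonal and of equal length $\sqrt{D}$, with $D$ given by \eqref{D} or \eqref{D7}.

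So the only thing I would do is invoke these two results in sequence. First, I would distinguish the two cases $(C_5,C_6)\neq 0$ and $(C_5,C_6)=0$, and in each case cite Proposition \ref{vert-p} for the fact that $u\perp v$ and $\|u\|=\|v\|=\sqrt{D}$. Then I would apply Proposition \ref{circle-p} with $\beta=C$ (or $\beta=C_7$ in the degenerate case, which coincides with $C$ anyway since $C=\sqrt{C_5^2+C_6^2+C_7^2}=|C_7|$ when $C_5=C_6=0$) to conclude that $h(t)$ traces a circle centered at the origin and lying in the plane $\langle u,v\rangle$.

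There is essentially no obstacle here: both the orthogonality/equal-length condition and the general circle parametrization lemma have already been proved, so the corollary reduces to a one-line citation in each of the two cases. The only mild care needed is to match the scalar $\beta$ correctly in the degenerate case and to note that $\langle u,v\rangle$ is genuinely a plane (which follows from $\|u\|=\|v\|\neq 0$ together with $u\perp v$, assuming $D\neq 0$; if $D=0$ then $u=v=0$ and $h$ is identically zero, a degenerate ``circle'' of radius $0$, which one may either exclude or mention explicitly).
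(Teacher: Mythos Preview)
Your proposal is correct and matches the paper's approach exactly: the paper gives no explicit proof of this corollary at all, treating it as an immediate consequence of Propositions~\ref{vert-p} and~\ref{circle-p} in precisely the way you describe. Your additional remarks about the degenerate case $D=0$ and the identification $\beta=C$ are fine clarifications, but the paper does not bother with them.
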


In the next, we assume that the vector of constants $(C_1,C_2,C_3,C_4)\neq 0$ to avoud degenerate solution.

\subsection{Geodesics} \label{sec2.2}
The base part of the system is generally given as $\dot c(t)=\sum_{i=1}^4h_ie_i$ for a geodesic $c(t)$, control functions $h_i$ and generators of the control distribution $e_i$. Direct computation gives that our base system for $b=(b_1,b_2,b_3,b_4)$ and $a=(a_2,a_3,a_4)$ takes form 
\begin{align}
\begin{split}
\label{base-system}
\dot b& =h, \\ 
\dot a&=Ab,
\end{split}
\end{align}
where $A$ is the non-trivial submatrix of \eqref{adjoint-action}, i.e.
$$A=\left[ \begin {smallmatrix}  -{ h_2} ( t ) &{ h_1}
 ( t ) &{ h_4} ( t ) &-{ h_3} ( t
 ) \\ -{ h_3} ( t ) &-{ 
h_4} ( t ) &{ h_1} ( t ) &{ h_2} ( t
 ) \\-{ h_4} ( t ) &{ 
h_3} ( t ) &-{ h_2} ( t ) &{ h_1} ( t
 ) \end {smallmatrix} \right].$$

\begin{prop*}
Solutions of the system \eqref{base-system} starting at the origin $o=(0,\dots,0)$ take form
\begin{align}
\begin{split}
\label{ab}
b(t)&=\frac{1}{C}\sin(Ct)u+\frac{1}{C}(1-\cos(Ct))v,\\
a(t)&=\frac{1}{C^2}(Ct-\sin(Ct))w,
\end{split}
\end{align}
where $u,v$ are vectors \eqref{uv}, and denoting by $w_{ij}=(u \wedge v)_{ij}$ the components of the wedge product $u\wedge v$
\begin{align} \label{w-as-wedge}
w=\left[ \begin {smallmatrix} 
w_{21}+w_{34} \\
w_{31}+w_{42} \\
w_{23}+w_{41}
\end {smallmatrix} \right]. 
\end{align} 
\end{prop*}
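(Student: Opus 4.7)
\medskip

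The plan is to integrate the two equations of \eqref{base-system} in turn, using the explicit form of $h(t)$ given by Proposition \ref{vert-p}.

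First I would handle $\dot b = h$: since $h(t)=\cos(Ct)u+\sin(Ct)v$ with $u,v$ constant vectors, a single antiderivative gives
$$
b(t)=\int_0^t h(s)\,ds=\tfrac{1}{C}\sin(Ct)u+\tfrac{1}{C}\bigl(1-\cos(Ct)\bigr)v,
$$
which matches the first line of \eqref{ab} and automatically satisfies $b(0)=0$.

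Next I would tackle $\dot a=Ab$. The essential observation is that the entries of the matrix $A$ depend linearly on $h$, and on inspection each component of $Ab$ is bilinear and antisymmetric in the pair $(h,b)$; writing $B(x,y):=A(x)\,y$ one checks directly from the formula for $A$ that $B(x,x)=0$ and $B(y,x)=-B(x,y)$ for every $x,y\in\R^4$. Substituting $h(t)$ and $b(t)$ as above and expanding by bilinearity, the $B(u,u)$ and $B(v,v)$ terms vanish and the $B(u,v),\,B(v,u)$ terms combine to give
$$
\dot a(t)=B\bigl(h(t),b(t)\bigr)=\tfrac{1}{C}\bigl[\cos(Ct)(1-\cos(Ct))-\sin^{2}(Ct)\bigr]B(u,v)=\tfrac{1}{C}\bigl(1-\cos(Ct)\bigr)\bigl(-B(u,v)\bigr).
$$
A straightforward integration with $a(0)=0$ then yields
$$
a(t)=\tfrac{1}{C^{2}}\bigl(Ct-\sin(Ct)\bigr)\bigl(-B(u,v)\bigr),
$$
and it only remains to identify $-B(u,v)$ with the vector $w$ in \eqref{w-as-wedge}.

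For the identification I would read off each component from the explicit form of $A$: for instance $B(u,v)_1=-u_2v_1+u_1v_2+u_4v_3-u_3v_4$, so $-B(u,v)_1=(u_2v_1-u_1v_2)+(u_3v_4-u_4v_3)=w_{21}+w_{34}$, and analogously $-B(u,v)_2=w_{31}+w_{42}$ and $-B(u,v)_3=w_{23}+w_{41}$. The main (and only mild) obstacle is bookkeeping the signs in these three componentwise identifications; once they are checked, the formula for $a(t)$ in \eqref{ab} follows. I would note that the computation is insensitive to whether $u,v$ come from \eqref{uv} or \eqref{uv7}, since only their bilinear pairing via $B$ is used.
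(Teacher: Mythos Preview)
Your proof is correct and follows the same strategy as the paper's (very terse) argument: integrate $h$ to get $b$, then compute $Ab$ and integrate again to get $a$. Your added observation that $B(x,y):=A(x)y$ is antisymmetric is a clean organizational device that the paper does not make explicit, but it amounts to the same direct computation.
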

\begin{proof}
We find the solution $b(t)$ by a direct integration of $h(t)$ and involving the initial condition. Then computing $Ab(t)$ and subsequent direct integration gives $a(t)$ as proposed. The description of $w$ the follows. 
\end{proof}

Explicitly, denoting $u=[u_i], v=[v_i]$ components of the vectors $u, v$, the vector $w$ takes form
$$
w= \left[ \begin {smallmatrix} -u_{{1}}v_{{2}}+u_{{2}}v_{{1}}+u_{{3
}}v_{{4}}-u_{{4}}v_{{3}}\\ -u_{{1}}v_{{3
}}-u_{{2}}v_{{4}}+u_{{3}}v_{{1}}+u_{{4}}v_{{2}}
\\ -u_{{1}}v_{{4}}+u_{{2}}v_{{3}}-u_{{3}}v
_{{2}}+u_{{4}}v_{{1}}\end {smallmatrix} \right].
$$
Writing explicitly $w$ using the constants $C_i$ and $C,D$, we end up with 
$$
w= -\frac{D}{C} \left[ \begin{smallmatrix}
C_5 \\ C_6 \\C_7
\end{smallmatrix}
\right].
$$

Let emphasize that the parametrization of geodesics is encoded in level sets of the Hamiltonian \eqref{hamiltonian}
of the system. In particular, the arc length parametrization corresponds to the level set $K=\frac12$, \cite{ABB}. In our case, the level sets correspond to the choice that $D$ is constant (see \eqref{D} for $D$) and arc length parametrization corresponds to $D=1$.

Finally, it turns out that for each geodesic $c(t)=(a(t),b(t))$,  the component $b(t)$ corresponds to a circle starting at the origin, while $a(t)$ corresponds to a line. We discuss some examples in Section \ref{sec3.3} in detail.

\section{Symmetries and geodesics}

\subsection{Action symmetries on geodesics} \label{sec3.1}
Let us consider symmetries corresponding to the algebra $\frak{sp}(1)_d$ that determine double rotations in the distribution $H$ around the origin in the same angle and identity on the $a$-part. 
Each non-degenerate geodesic $c(t)=(b(t),a(t))$ satisfies that $b(t)$ is a linear combination of two orthogonal vectors $u$ and $v$ of the same length; thus the curve $b(t)$ lies in the plane $\langle u,v \rangle$. 
 
Let us remind that a Maxwell point is a point where two different geodesics meet each other with the same values of the cost functional and time (called a Maxwell time).
We use the above symmetries to discuss Maxwell points of the geodesics.

\begin{prop*} \label{sym-d}
For each non-degenerate geodesic $c(t)=(a(t),b(t))$ starting at the origin, there is a family of geodesics of the same parametrization starting at origin all intersecting at the same Maxwell time $T=\frac{2\pi}{C}$ at the Maxwell point
$$c(T)=\left[-\frac{2\pi D}{C^3}C_5,-\frac{2\pi D}{C^3}C_6,-\frac{2\pi D}{C^3}C_7,0,0,0,0\right].$$ 
In particular, the geodesic $c(t)$ is no more optimal after it reaches the time $T$.
\end{prop*}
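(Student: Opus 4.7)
The plan is to apply the isotropy subgroup with Lie algebra $\frak{sp}(1)_d$, generated by $s_4,s_5,s_6$, to the geodesic $c$. Since these vector fields vanish at the origin and are infinitesimal symmetries of $(\mathcal{H},H,g)$, their flows are sub-Riemannian isometries fixing $o$, and therefore map each normal geodesic starting at $o$ to another normal geodesic starting at $o$ with the same arc-length parametrization and the same energy. Moreover, none of $s_4,s_5,s_6$ contains a $\partial_{a_i}$ term, so the corresponding flows act trivially on each $a$-coordinate.

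First I would evaluate $c(T)$ directly from~\eqref{ab}. At $t=T=2\pi/C$ one has $\sin(CT)=0$ and $1-\cos(CT)=0$, hence $b(T)=0$; since $CT-\sin(CT)=2\pi$ and $w=-\tfrac{D}{C}(C_5,C_6,C_7)^t$ (from the explicit formula derived just after the previous proposition), we get
\[
a(T)=\tfrac{2\pi}{C^2}\,w=-\tfrac{2\pi D}{C^3}(C_5,C_6,C_7)^t,
\]
which gives the coordinates of the claimed Maxwell point.

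Next I would verify that every image $\phi_\tau(c)$ under a flow $\phi_\tau$ generated by a combination of $s_4,s_5,s_6$ also passes through $c(T)$ at time $T$: such $\phi_\tau$ fixes the $a$-coordinates pointwise, and since it fixes the origin in the $b$-coordinates and $b(T)=0$, one gets $\phi_\tau(c(T))=c(T)$. Because the group with algebra $\frak{sp}(1)_d$ acts on $\R^4$ by left quaternionic multiplication, it acts freely on $\R^4\setminus\{0\}$; the non-degeneracy assumption $(C_1,\dots,C_4)\neq 0$ ensures that the initial covector $(h_1(0),\ldots,h_4(0))$ is non-zero, so non-identity flows move it and produce geodesics genuinely distinct from $c$. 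This yields a three-parameter family of minimizing geodesics from $o$ to $c(T)$ of the same length as $c|_{[0,T]}$.

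The final and most delicate step is to deduce that $c$ ceases to be minimizing past $T$. This is the standard Maxwell-point argument in sub-Riemannian geometry: if two distinct minimizers of the same length joining the same two points exist, then past their meeting point neither can remain minimizing, since concatenating one with the continuation of the other would yield a minimizer with a corner, contradicting the smoothness of normal sub-Riemannian extremals (and here no strict abnormals appear, by the remark following~\eqref{hamiltonian}). Therefore $c(t)$ is no longer optimal for $t>T$.
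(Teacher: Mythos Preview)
Your proposal is correct and follows essentially the same route as the paper: act by the isotropy group $\exp(\frak{sp}(1)_d)$, use that these symmetries leave the $a$-coordinates untouched, and read off $b(T)=0$ from $\sin(CT)=0$, $1-\cos(CT)=0$ in \eqref{ab}. Your write-up is in fact a bit more careful than the paper's own proof on two points: you justify that the images are \emph{genuinely different} geodesics via freeness of left quaternionic multiplication on $\R^4\setminus\{0\}$ (the paper only says ``generally different''), and you spell out the standard Maxwell argument for loss of optimality past $T$, which the paper leaves implicit.
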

\begin{rem*}
Let us remind that the (family of) geodesics carries the arc-length parametrization for $D=1$.
\end{rem*}
\begin{proof}
The curve $b(t)$ lies in the plane $\langle u,v \rangle \subset H$ through origin $o$. Each rotation $S \in \exp(\frak{sp}(1)_d) \simeq SO(3,\R)_d$ of $H$ rotates the plane around $o$ or maps the plane to a different plane in $H$ through $o$. In this way, we get a family of generally different geodesics through $o$ sharing the same parametrization as $b(t)$, and the family is the orbit of $b(t)$ with respect to the action of $SO(3,\R)_d$, i.e.
$$S(b(t))=\frac{1}{C}\sin(Ct)S(u)+\frac{1}{C}(1-\cos(Ct))S(v).
$$
Thus we need to focus on the coefficients in the combination. It holds for $t=\frac{2\pi k}{C}$
\begin{gather*}
 \sin(C\cdot \frac{2\pi k}{C})=0, \ \ \ 
1-\cos(C\cdot \frac{2\pi k}{C}) = 0
\end{gather*}
and first such positive time appears for $k=1$. Thus all geodesics from the orbit of $b(t)$ intersect for the time $T=\frac{2\pi}{C}$.

Since the symmetries corresponding to $\frak{sp}(1)_d$ preserve $w$, they
 only impact the curve $b(t)$ and we get identity on $a(t)$.
So we get the proposed family of curves as the orbit of the action of the symmetries corresponding to $\frak{sp}(1)_d$ acting on $c(t)$; the rest is a direct computation.
\end{proof}

\begin{rem*}
This mimics the behaviour known from the local control on the classical Heisenberg group, \cite{mope,hzn}.
\end{rem*}

Let us finally comment on symmetries corresponding to the algebra $\frak{sp}(1)_c$. They also determine double rotations in the distribution $H$ around the origin in the same angle. Thus if they were to induce some other restrictions on solutions, they would in particular give the same restrictions to the part $b(t)$ of the solution. 
 The same principle then applies to each symmetry corresponding to $\frak{so}(4,\R)$, thus we cannot get better estimate in this way.

\subsection{Moduli space}
\label{sec3.2}
For each geodesic $c(t)$ and thus for the corresponding vectors $u$, $v$ and $w$, there is a symmetry $R$ such that 
\begin{align} \label{rot}
R:\,
u\mapsto u_1:= \left[ \begin {smallmatrix} 
\sqrt{D} \\0\\0\\0
\end{smallmatrix}
\right], \ \ v\mapsto v_1 := \left[ \begin {smallmatrix} 
0\\ \sqrt{D}\\0\\0
\end{smallmatrix}
\right], \ \ w \mapsto w,
\end{align}
so we can view $b(t)$ as a curve lying in the basis plane of the first two standard basis vectors.
Thus each geodesic can be represented by the geodesic of the form
\begin{align}
\begin{split} \label{factorgeodesic}
b_1(t)&=\frac{D}{C}\sin(Ct)\\
b_2(t)&=\frac{D}{C}(1-\cos(Ct))\\
b_3(t)&=0\\
b_4(t)&=0\\
a_2(t)&= -\frac{DC_5}{C^3}(Ct-\sin(Ct))\\ 
a_3(t)&= -\frac{DC_6}{C^3}(Ct-\sin(Ct))\\ 
a_4(t)&= -\frac{DC_7}{C^3}(Ct-\sin(Ct)).
\end{split}
\end{align}
This means that we can factorize the solution space by the action of the symmetries of $SO(3,\R)_d$ to 
$$\mathcal{H}_d:=\mathcal{H}/SO(3,\R)_d$$
and each geodesic \eqref{factorgeodesic} is the representative of a class in the quotient space.

Denote $(\cdot , \cdot)$  the scalar product on $\R^4$. The (square of the) length $(b,b)$ of the vector $b=(b_1,b_2,b_3,b_4)$ is an invariant of the action of $SO(3,\R)_d$.  This means that $b_d :=(b,b)$ together with $a_2, a_3, a_4$ determine (local) coordinates on $\mathcal{H}_d \simeq \R^4=\R \oplus \R^3$. Geodesics then (locally) descend to curves of the form
\begin{align}
\begin{split} \label{faktorkrivky}
b_d(t)&=\frac {2 D^2}{C^2}(1-\cos \left( Ct \right)) \\
a_2(t)&= -\frac{DC_5}{C^3}(Ct-\sin(Ct))\\ 
a_3(t)&= -\frac{DC_6}{C^3}(Ct-\sin(Ct))\\ 
a_4(t)&= -\frac{DC_7}{C^3}(Ct-\sin(Ct)).
\end{split}
\end{align}

Following \cite{mya1,ampa}, we can use these observations to discuss (locally, around the origin) critical points of the exponential map 
\begin{align*}
& \exp:\left(\left\{K=\frac12\right\} \cap T_o^*\R^7\right) \times \R \to \R^7, \\ 
&\exp(u,C_5,C_6,C_7,t) =(a(t),b(t))
\end{align*} for the Hamiltonian \eqref{hamiltonian} with initial values $h(0)=u$, $h_i(0)=C_i$ for $i=5,6,7$,  $a(0)=0$, $b(0)=0$. Moreover, the level set $K=\frac12$ reflects the condition $D=1$ which means that the vector $u$ has the length equal to $1$.

Due to the invariancy with respect to $SO(3,\R)_d$, we can locally factorize the exponental map to the map
\begin{align}
\begin{split}
\label{factorexp}
&\exp_d:\left(\left\{K= \frac12 \right\} \cap T_o^*\R^4\right) \times \R \to \R^4, 
\\  
&\exp_d(C_5,C_6,C_7,t) =(a_2(t),a_3(t),a_4(t),b_d(t))
\end{split}
\end{align}
which is correctly defined if we assume $D=1$ that is equivalent to $K=\frac12$. Let us note that we can imagine this as working with geodesics \eqref{factorgeodesic} with $D=1$, which manifestly leads to curves \eqref{faktorkrivky} for $D=1$. Thus we focus on the Jacobian of the map 
$$
(C_5,C_6,C_7,\tau)\mapsto \left(\frac {2}{C^2}(1-\cos \left( \tau \right)),
-\frac{C_5}{C^3}(\tau-\sin(\tau)),
-\frac{C_6}{C^3}(\tau-\sin(\tau)), 
-\frac{C_7}{C^3}(\tau-\sin(\tau))\right),
$$
where we consider $\tau=Ct$, and its zero points. Direct computation gives that the Jacobian equals to
\begin{align}
\begin{split}
\label{jacobian}
J(\tau) =\,\,&4{\tau}^{3}\sin ( \tau ) +8{\tau}^{2}  \cos^{2}
 ( \tau )   -4\tau  \cos^{2} ( \tau
 )  \sin ( \tau ) + 
8{\tau}^{2}\cos
 ( \tau ) - 
 16\tau\cos ( \tau ) \sin ( 
\tau ) 
\\ &-16{\tau}^{2}-8 \cos^{3} ( \tau )+ 
 20\tau\sin ( \tau ) +8  \cos^{2}
 ( \tau )   +8\cos ( \tau ) -8
 \end{split}
 \end{align}
multiplied by $\frac{1}{C^{10}}$.
\begin{prop*}
The first positive zero of the function $J(\tau)$ from \eqref{jacobian} occures for $\tau=2\pi$.
\end{prop*}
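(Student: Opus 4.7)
The plan is to reduce $J(\tau)$ to a manageable factored form. By direct expansion using $\sin^2\tau = 1-\cos^2\tau$, one verifies the identity
\[
J(\tau) \;=\; 4(\tau - \sin\tau)^2\bigl(\tau\sin\tau + 2\cos\tau - 2\bigr).
\]
This factorisation is natural from a block-determinant viewpoint: using the rotational symmetry in $(C_5,C_6,C_7)$ one may assume $C_5 = C_6 = 0$ and compute the Jacobian of \eqref{factorexp} blockwise. The $(a_2,a_3)$-rows then give a diagonal block contributing $(\tau-\sin\tau)^2/C^6$, while the remaining $2\times 2$ minor in $(a_4,b_d)$ vs.\ $(C_7,t)$ simplifies, after cancellation of the mixed cross terms, to $4\bigl(\tau\sin\tau + 2\cos\tau - 2\bigr)/C^4$. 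Together these produce the identity above up to the positive factor $1/C^{10}$ already noted after \eqref{jacobian}.

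Given the factorisation, the argument becomes elementary. Since $\sin\tau < \tau$ for every $\tau > 0$, the factor $(\tau - \sin\tau)^2$ is strictly positive on $(0, 2\pi)$, so the positive zeros of $J$ on that interval coincide with the positive zeros of
\[
F(\tau) \;:=\; \tau\sin\tau + 2\cos\tau - 2.
\]
A direct substitution gives $F(2\pi) = 0$, so it suffices to prove $F(\tau) < 0$ on $(0, 2\pi)$.

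For this I would differentiate twice: $F'(\tau) = \tau\cos\tau - \sin\tau$ and $F''(\tau) = -\tau\sin\tau$. On $(0,\pi)$ we have $F'' < 0$, so $F'$ strictly decreases from $F'(0) = 0$; hence $F' < 0$ on $(0,\pi]$ and $F$ strictly decreases on $[0,\pi]$ from $F(0) = 0$ to $F(\pi) = -4$. On $(\pi, 2\pi)$ the inequality $F'' > 0$ makes $F'$ strictly increasing from $F'(\pi) = -\pi$ to $F'(2\pi) = 2\pi$, so $F'$ vanishes exactly once at some $\tau^\star \in (\pi, 2\pi)$. Therefore $F$ first decreases on $[\pi,\tau^\star]$ (reaching a value strictly below $-4$) and then increases on $[\tau^\star, 2\pi]$ up to $F(2\pi) = 0$, remaining strictly negative throughout. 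Combining both intervals, $F < 0$ on $(0, 2\pi)$, which shows that $\tau = 2\pi$ is the first positive zero of $J$. The only real obstacle is spotting the factorisation, which is not visible in the raw form \eqref{jacobian}; once it is in hand, the proof reduces to a routine two-derivative sign analysis that requires no quantitative knowledge of $\tau^\star$.
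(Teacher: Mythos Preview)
Your proof is correct and follows essentially the same route as the paper: factor $J(\tau)$ as a multiple of $(\tau-\sin\tau)^2\bigl(\tau\sin\tau+2\cos\tau-2\bigr)$, then use the two derivatives $F'(\tau)=\tau\cos\tau-\sin\tau$ and $F''(\tau)=-\tau\sin\tau$ to locate the zeros of the second factor. Your interval-by-interval sign analysis on $(0,\pi]$ and $[\pi,2\pi]$ is a slightly more direct variant of the paper's argument (which instead tracks the extrema $\tau_k^E$ of $F$ and the values $F((2k+1)\pi)=-4$), but the substance is the same; incidentally, your overall sign $+4$ in the factorisation agrees with the value $J(\pi)=-16\pi^2$ computed from \eqref{jacobian}, whereas the paper carries a harmless sign typo there.
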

\begin{proof}
The function $J(\tau)$ can be factorized to
\[
J(\tau)=
-4(\tau-\sin(\tau))^2 (\tau \sin(\tau)+2 \cos(\tau)-2)
\]
and it is easy to see that the zeros of $J$ are $\tau=0$ and the zeros of
\[
f(\tau):=
\tau \sin(\tau)+2 \cos(\tau)-2\,.
\]
Trivially, the points where $\sin(\tau)=0$ and simultaneously $\cos(\tau)=1$,
i.e. the points $\tau=2k\pi$, $k\in\Z$, are the zeros of $f$.
Observing, that $f$ is an even $C^\infty(\R)$-function and
\begin{align*}
f'(\tau) &= \tau \cos(\tau)-\sin(\tau)\,,
\\
f''(\tau) &= -\tau \sin(\tau)\,,
\end{align*}
we see that 
the function $f$ has positive extremal points $\tau=\tau_k^E$, each one being a zero
of $\tau=\tan(\tau)$ in the interval $(k\pi,(k+\frac12)\pi))$, $k\in\NN$.
Moreover, $\tau_k^E$ is the point of a local minimum or maximum
for $k$ odd or even, respectively.
Realizing that $f((2k+1)\pi)=-4<0$ for $k\in\Z$ we can conclude that
the remaining (besides those $2k\pi$-ones) \emph{positive} 
zeros of $f$ (and hence of $J$) lie strictly in the intervals
$\left(\tau_{2k}^E,(2k+1)\pi\right)$, $k\in\NN$.
In particular, the first (smallest) positive root of $J$ is the point 
$\tau=2\pi$.
\end{proof}
The function $J(\tau)$ for $\tau>0$ has the  shape as in Figure \ref{fceJ}.
\begin{figure}[h]
 \begin{center}
 \includegraphics[height=60mm]{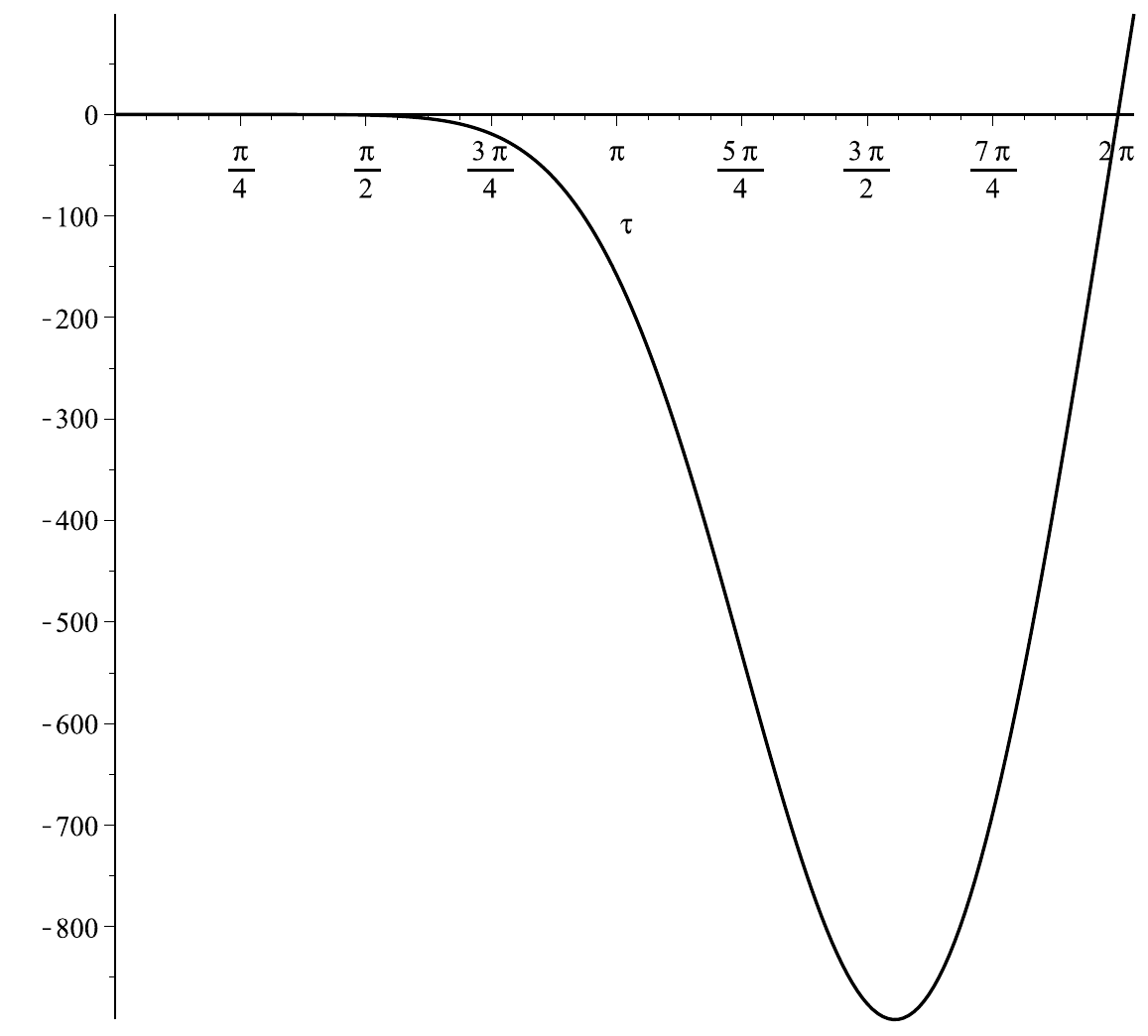}
 \caption{The graph of the function $J(t)$}
 \label{fceJ}
 \end{center}
 \end{figure}

Thus for the curves \eqref{faktorkrivky}, the time $T=\frac{2\pi}{C}$ is the first conjugate time. Going to the preimage of the quotient space, the same holds for geodesics \eqref{ab} of the original control problem.
Then from \cite[Theorem 8.72.]{ABB} we get the following statement.
\begin{cor*}
For geodesics \eqref{ab}, the time $T=\frac{2\pi}{C}$ is the cut time.
\end{cor*}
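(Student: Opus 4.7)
The plan is to combine the two bounds already in hand, namely a Maxwell time estimate from Proposition \ref{sym-d} and the first conjugate time estimate from the preceding proposition, and then invoke the cited result \cite[Theorem 8.72]{ABB}.

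First I would recall that by definition the cut time $t_{\mathrm{cut}}$ along a geodesic is the supremum of times on which the geodesic is (globally) length-minimizing. Two standard upper bounds apply: the first Maxwell time and the first conjugate time. Proposition \ref{sym-d} exhibits, for any non-degenerate geodesic \eqref{ab}, a one-parameter family of distinct arc-length geodesics from the origin meeting at $c(T)$ with $T=\frac{2\pi}{C}$ and sharing the same value of the cost functional; this gives $t_{\mathrm{cut}} \leq T$.

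Second I would obtain the matching lower bound via the conjugate time. The previous proposition shows that, on the $SO(3,\R)_d$-quotient, the first positive zero of the Jacobian $J(\tau)$ occurs at $\tau = 2\pi$. Since the action of $SO(3,\R)_d$ fixes the $a$-coordinates and acts as rotations in $H$ preserving the family \eqref{ab}, a critical point of $\exp$ on $\mathcal{H}$ projects to a critical point of $\exp_d$, so the first conjugate time along \eqref{ab} is exactly $t = \tau/C = \frac{2\pi}{C} = T$. Consequently the geodesic is free of conjugate points on $[0,T)$ and is therefore locally optimal up to time $T$.

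Finally, I would invoke \cite[Theorem 8.72]{ABB}, which in the present left-invariant setting asserts that when the first Maxwell time coincides with the first conjugate time, this common value is the cut time. Since both equal $T = \frac{2\pi}{C}$, the conclusion $t_{\mathrm{cut}} = T$ follows. The only delicate point in the argument is the passage between $\exp$ and $\exp_d$: one has to make sure that the Jacobian computation performed on the quotient coordinates $(b_d, a_2, a_3, a_4)$ genuinely detects the first conjugate time on the full space, which is guaranteed here because the $SO(3,\R)_d$-orbit directions contribute Maxwell (rather than new conjugate) phenomena already accounted for in Proposition \ref{sym-d}.
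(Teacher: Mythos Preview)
Your proposal is correct and follows essentially the same route as the paper: combine the Maxwell time $T=\frac{2\pi}{C}$ from Proposition~\ref{sym-d} with the first-conjugate-time identification obtained from the Jacobian $J(\tau)$ on the quotient $\mathcal{H}_d$, lift this back to geodesics \eqref{ab}, and then invoke \cite[Theorem~8.72]{ABB}. Your discussion of why the $\exp_d$ computation detects the first conjugate time for $\exp$ expands on what the paper states in one line (``going to the preimage of the quotient space, the same holds''), but the argument is the same in substance.
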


\subsection{Examples of orbits}
\label{sec3.3}
Let us show here some examples of the actions on a specific geodesic. Consider the arc-length parametrized geodesic $c(t)=(a(t),b(t))$ such that 
$$
a(t)=[\sin(t)-t, 0, 0]^t,\ \ b(t)=[\sin(t), 1-\cos(t), 0, 0]^t.
$$
Thus $b(t)$ is the combination of $u=[1,0,0,0]^t$ and $v=[0,1,0,0]^t$ and according to \eqref{w-as-wedge}, the vector $w$ takes form $[-1,0,0]^t$.
Let us note that the geodesic corresponds to the choice of constants $C_2=C_5=1$ and remaining constants $C_i$ vanish.

The orbit of the geodesic $c(t)$ by means of $a(t), b(t)$ with respect to the action of $SO(3,\R)_c$ is as follows
 \begin{align*} 
 S(a(t))&=\frac{1}{q^2}\left[ \begin {smallmatrix} - ( t-\sin ( t )  ) 
 (  (  q_3^{2}+ q_2^{2} ) \cos ( 2q
s ) + q_1^{2} ) \\  ( t-\sin
 ( t )  )  ( { q_1}{ q_2}\cos ( 2
qs ) +{ q_3}q\sin ( 2qs ) -{ q_1}{ q_2}
 ) \\ - ( t-\sin ( t ) 
 )  ( -{ q_1}{ q_3}\cos ( 2qs ) +{ 
q_2}q\sin ( 2qs ) +{ q_1}{ q_3} ) 
\end {smallmatrix} \right]
 \\
 S(b(t))&=\frac{1}{q}\left[ \begin {smallmatrix} { q_1} ( -1+\cos ( t ) 
 ) \sin ( qs ) +\sin ( t ) \cos ( qs
 ) q\\ -q ( -1+\cos ( t ) 
 ) \cos ( qs ) +\sin ( qs ) \sin ( t
 ) { q_1}\\ \sin ( qs )  ( 
\sin ( t ) { q_2}+{ q_3}\cos ( t ) -{ 
q_3} ) \\ \sin ( qs )  ( \sin
 ( t ) { q_3}-{ q_2}\cos ( t ) +{ q_2}
 ) \end {smallmatrix} \right]
 \end{align*}
where we write the action of the general element $S=q_1s_1+q_2s_2+q_3s_3$ (see Proposition~\ref{prop-symmetries} for notation) and $$q=\sqrt{q_1^2+q_2^2+q_3^2}.$$

\begin{rem*}
One can see the action corresponding to $\frak{sp}(1)_c$ as the quaternionic aka rotational grading element which reflects the behaviour on geodesics.
\end{rem*}

We see that $t=2\pi$ is the first conjugate time for $b(t)$ and we shall choose $q_2=q_3=0$ to get a family of curves $c(t)$ intersecting at one point which is $P=[-2\pi,0,0,0,0,0,0]^t$. It is the orbit for the action corresponding to $s_1$ which gives a family of rotations in the plane $\langle u, v \rangle$ acting on $b(t)$. We can vizualise the circle $b(t)$ (red) and its orbit in the plane as in Figure \ref{orbita}. 
\begin{figure}[h]
 \begin{center}
 \includegraphics[height=60mm]{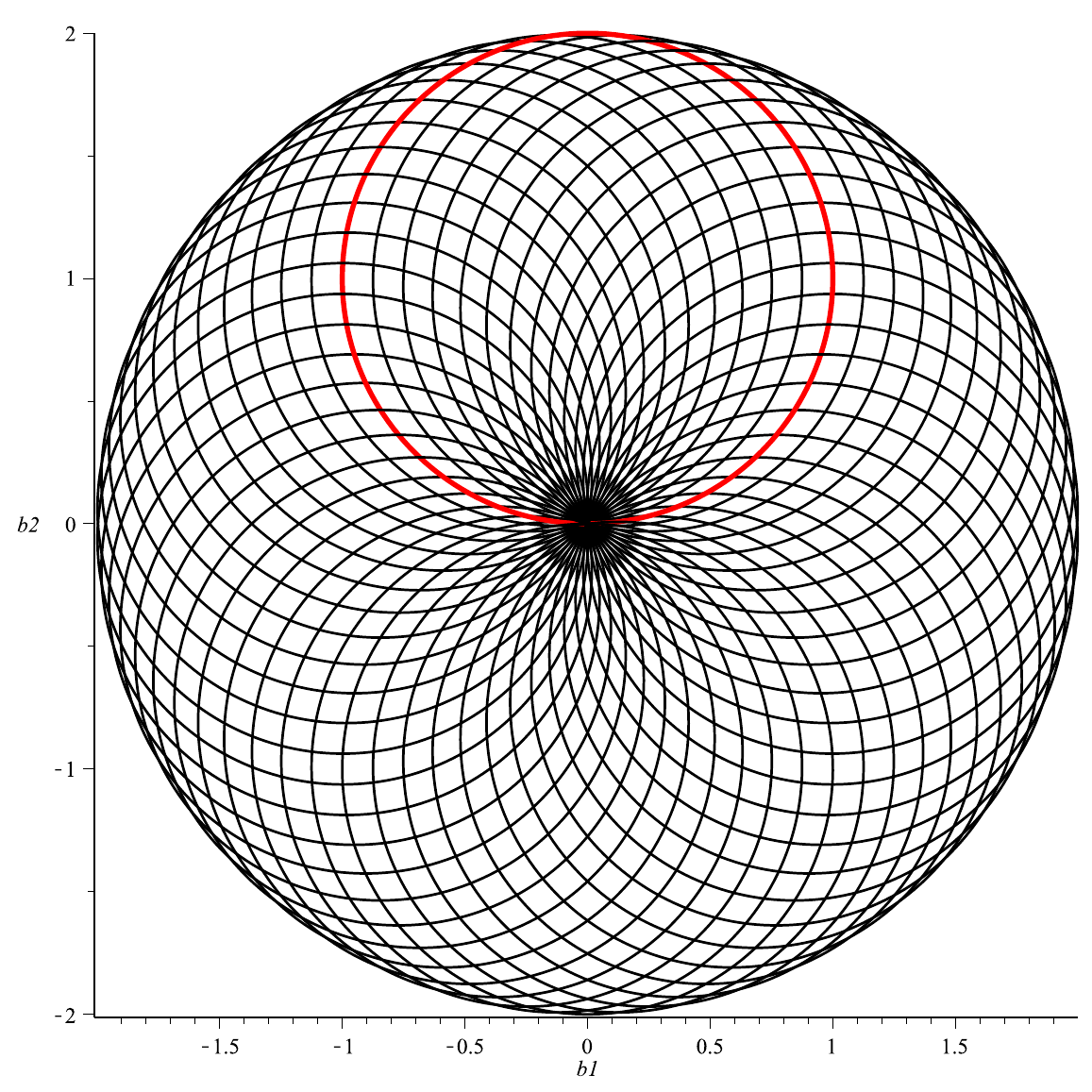}
 \caption{The red curve $b(t)$ and its orbit in the plane}
 \label{orbita}
 \end{center}
 \end{figure}
 
 \noindent
 In particular, it manifestly reminds the geodesics on real Heisenberg group.
 
The action of the one-parametric family (parametrized by $s$) given by $s_1$ preserves $w$ which is the common axis of the corresponding rotations in double angle; that is why it also works for $a(t)$. However, the general action does not preserve the component $a(t)$ of the solution. The orbit of $a(t)$ for the symmetry corresponding to the choice $q_1=q_2=1$, $q_3=-1$ and parameter $s=0,\dots,1$ (from green to red) can be visualised as in Figure \ref{orbita2}.
\begin{figure}[h]
 \begin{center}
 \includegraphics[height=60mm]{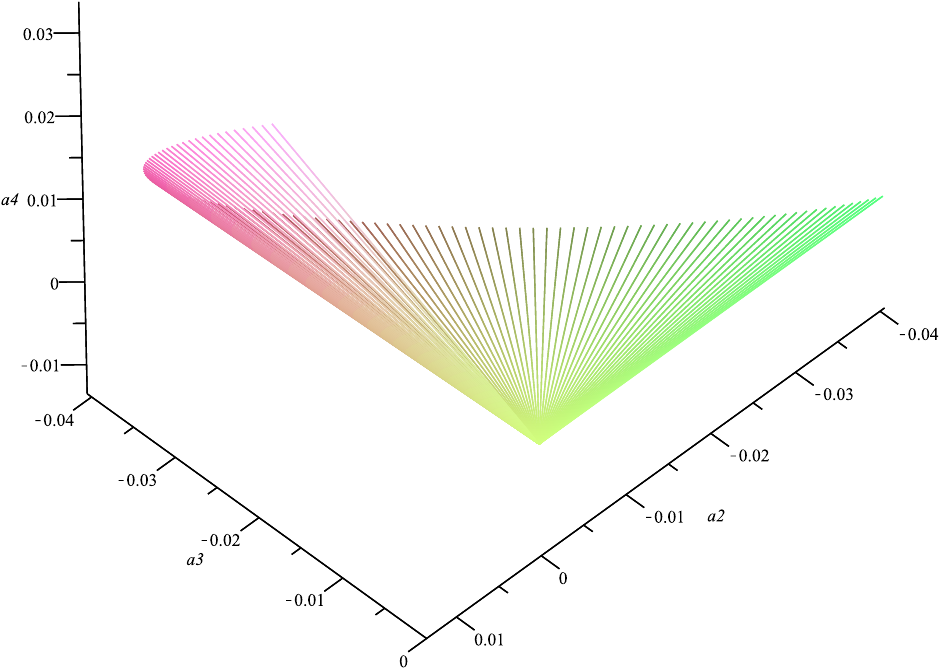}
 \caption{The orbit of $a(t)$}
 \label{orbita2}
 \end{center}
 \end{figure}
 
Moreover, there is a one more curve with the same property for the choice of the parameter $s=\pi$ of the form
$$
a(t)=[\sin(t)-t, 0, 0]^t, \ \ b(t)=[ -\sin(t), \cos(t)-1, 0, 0]^t
$$

Let us note that the action of the element $T=q_4s_4+q_5s_5+q_6s_6$
(see Proposition~\ref{prop-symmetries} for notation), 
i.e. of general element of $\SO(3,\R)_d$ on $b(t)$, is as follows
$$
T(b(t))=\frac{1}{q}\left[ \begin {smallmatrix} -{ q_4} ( \cos ( t ) -1
 ) \sin ( qs ) +\sin ( t ) \cos ( qs
 ) q\\ -q ( \cos ( t ) -1
 ) \cos ( qs ) -\sin ( qs ) \sin ( t
 ) { q_4}\\ -\sin ( qs )  ( 
\sin ( t ) { q_5}-{ q_6}\cos ( t ) +{ 
q_6} ) \\ -\sin ( qs )  ( { 
q_5}\cos ( t ) +\sin ( t ) { q_6}-{ q_5}
 ) \end {smallmatrix} \right] 
$$
and $T(a(t))=a(t)$, so $T$ preserves $t=2\pi$ and we get three--parametric family of curves intersecting at one point $P$. Then it follows how it works for their composition, i.e. a general symmetry.

\end{document}